\documentclass[a4paper,12pt]{amsart}
\usepackage{amsmath}
\usepackage{graphicx}\usepackage{caption}\usepackage{subcaption}
\usepackage[english]{babel}
\usepackage[usenames]{color}
\newtheorem{theorem}{Theorem}[section]
\newtheorem{proposition}[theorem]{Proposition}
\newtheorem{lemma}[theorem]{Lemma}

\newtheorem{remark}[theorem]{Remark}


\newcommand{\n}{\mathcal{N}}

\newcommand{\dO}{\partial\Omega}

\begin{document}

\title[]{Dirichlet eigenfunctions on the cube, sharpening the Courant nodal inequality}

\author{Bernard Helffer \and Rola Kiwan}
\thanks{B. Helffer : Bernard.Helffer@math.u-psud.fr, Laboratoire de Math\'ematiques,  Univ. Paris-Sud 11 and CNRS, F 91405 Orsay Cedex, France, and \\ Laboratoire Jean Leray,  Universit\'{e} de Nantes.}
\thanks{R. Kiwan: rkiwan@aud.edu, Americain University in Dubai, p.o.box: 28282, Dubai, United Arab Emirates}


\maketitle

\section{Introduction and Main result}

Consider the Dirichlet eigenvalues of the Laplacian in the domain $\Omega$

\begin{equation}\label{E}
\left\{
\begin{array}{rll}
-\Delta u &= \lambda u & \text{ in } \Omega\\
u&=0& \text{ on } \dO.
\end{array}
\right.
\end{equation}
We denote by $\{\lambda_k \}_{k\ge 1}=\{\lambda_k (\Omega)\}_{k\ge 1}$ the sequence of eigenvalues:
$$  \lambda_1 < \lambda_2 \leq \lambda_3 \leq ... \leq \lambda_k \leq ... $$
It is well known that the first eigenvalue is simple and the eigenfunction $u_1$ has a constant sign in $\Omega$. All the higher order eigenfunctions must change sign inside $\Omega$ and, consequently, must vanish inside $\Omega$. 

We call nodal set of an eigenfunction $u_k$ associated with $\lambda_k$  the closure of the zero set of $u_k$,
$$\n(u_k)=\overline{\{x \in \Omega;\ u_k(x)=0\}}.$$ This nodal set cuts the domain $\Omega \setminus \n(u_k)$ into $\mu_k=\mu(u_k)$ connected components called ``nodal domains''.

The famous Courant nodal  theorem \cite{Cou} of 1923 states  that  $$\mu(u_k) \leq k.$$
We will say that an eigenvalue  $\lambda$ is Courant sharp  if $\lambda=\lambda_k$ and if  there exists an associate  eigenfunction with $k$ nodal domains. If it is always true in the case of dimension $1$ by the Sturm-Liouvillle theory, Pleijel's theorem \cite{Pl} asserts in 1956 that equality can only occur for a finite set of $k$'s, when the dimension is  at least two.\\

Since we know that the first eigenfunction does not vanish and that the second eigenfunction has exactly two nodal domains,  $\lambda_1$ and $\lambda_2$ are Courant sharp ($\mu_1=1$ and $\mu_2=2$). We are now interested in checking if  other eigenvalues are Courant sharp.

Many papers (and some of them quite recent)  have investigated in which cases this inequality is sharp:  Pleijel \cite{Pl}, Helffer--Hoffmann-Ostenhof--Terracini 
 \cite{HHOT1,HHOT2}, Helffer--Hoffmann-Ostenhof \cite{HH1,HH2}, B\'erard-Helffer \cite{BeHe1,BeHe2,BeHe3}, Helffer--Persson-Sundqvist  \cite{HPS}, 
 L\'ena \cite{Len3}, Leydold \cite{Ley1,Ley2,Ley3}. All these results were devoted to $(2D)$-cases in open sets in $\mathbb R^2$ or in surfaces like $\mathbb S^2$ or $\mathbb T^2$.\\

The aim of the current paper is to look for analogous  results for domains in $\mathbb{R}^3$ and, as $\AA.$Pleijel was suggesting (see below for an historical discussion), for  the simplest case of the cube. More precisely, we  will prove:
\begin{theorem}
In the case of the cube $ (0,\pi)^3$ the only eigenvalues of the Dirichlet Laplacian  which are Courant sharp  are the two first eigenvalues: $\lambda_1=3$ and $\lambda_2=6$.
\end{theorem}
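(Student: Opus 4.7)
My plan is to adapt Pleijel's classical strategy to dimension three. On the cube $\Omega=(0,\pi)^3$ the Dirichlet spectrum consists of $\lambda_{p,q,r}=p^2+q^2+r^2$, $(p,q,r)\in\mathbb{N}^3$, with product eigenfunctions $\sin(px)\sin(qy)\sin(rz)$, so both the eigenvalues and their multiplicities are completely explicit. First, Faber--Krahn in $\mathbb{R}^3$ (using $\lambda_1$ of the unit ball equal to $\pi^2$) applied to every nodal domain $D$ of an eigenfunction associated with $\lambda_k$ gives $|D|\ge \tfrac{4\pi^4}{3}\lambda_k^{-3/2}$. Summing over nodal domains of $u_k$ and using $|\Omega|=\pi^3$ produces the three-dimensional Pleijel-type bound
\begin{equation*}
\mu(u_k)\le \frac{3\,\lambda_k^{3/2}}{4\pi}.
\end{equation*}
Courant-sharpness requires $k=\mu(u_k)\le\tfrac{3\lambda_k^{3/2}}{4\pi}$. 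Confronting this with the lattice counting function of the cube, one obtains a short candidate list: $k\in\{1,2,3,5,6,8,12\}$, corresponding to $\lambda_k\in\{3,6,9,11,14\}$.

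The cases $k=1,2$ are already Courant-sharp by the elementary facts recalled above, so four eigenvalues remain to be treated. The key structural observation is that for every triple $\{p,q,r\}$ the Chebyshev-type identities $\sin(2t)=2\sin(t)\cos(t)$ and $\sin(3t)=\sin(t)(4\cos^2(t)-1)$ allow one to rewrite every eigenfunction associated with $p^2+q^2+r^2$ as
\begin{equation*}
u(x,y,z)=\sin(x)\sin(y)\sin(z)\,P(\cos x,\cos y,\cos z),
\end{equation*}
where $P$ is a polynomial depending linearly on the coefficients in the product basis. Since the prefactor is strictly positive on $\Omega$, counting the nodal domains of $u$ inside $\Omega$ is equivalent, via the diffeomorphism $(x,y,z)\mapsto(\cos x,\cos y,\cos z)$, to counting the connected components of $(-1,1)^3\setminus\{P=0\}$. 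I would then treat each remaining candidate separately, exploiting the $S_3$-symmetry among the coordinates and the reflection symmetries $x\mapsto\pi-x$ (i.e.\ $X\mapsto -X$) to reduce the coefficient space to a short list of normal forms: for $\lambda=6$, $P$ is affine linear so $\{P=0\}$ is a single plane and $\mu\le 2<3$; for $\lambda=9$, $P=aXY+bXZ+cYZ$ is a homogeneous quadric to be shown to yield $\mu\le 4<5$; for $\lambda=11$, $P$ is an affine combination of $4X^2-1,4Y^2-1,4Z^2-1$, a central quadric whose level sets inside the cube must be shown to produce $\mu\le 7<8$.

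The most delicate case is $\lambda=14$, where the eigenspace has dimension six and $P$ ranges over the family spanned by $X(4Y^2-1), X(4Z^2-1), Y(4X^2-1), Y(4Z^2-1), Z(4X^2-1), Z(4Y^2-1)$, whose zero set is a cubic algebraic surface in $(-1,1)^3$. The task is to show that the complement has at most $11$ connected components, uniformly in the six parameters. This is the principal obstacle: the nodal surface can undergo topological transitions as the coefficients vary, so a direct enumeration is delicate. I expect to reduce the six-parameter family to a short list of normal forms via the $S_3\times(\mathbb{Z}/2)^3$ symmetry, and then control the number of components by combining an analysis of the critical points of $P$ in $(-1,1)^3$ (Morse-theoretic or Euler-characteristic bounds) with a parameter-continuation argument that tracks how components can be created or destroyed. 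Once this uniform bound is established the theorem follows.
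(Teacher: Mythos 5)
Your reduction step is essentially the paper's: the same Faber--Krahn/Pleijel bound $\mu(u_k)\le \frac{3}{4\pi}\lambda_k^{3/2}$, confrontation with the lattice-point count, and the change of variables $(u,v,w)=(\cos x,\cos y,\cos z)$ after dividing out the positive factor $\sin x\sin y\sin z$. Two remarks on the list: you can shorten it for free by noting that Courant sharpness forces $\lambda_{k-1}<\lambda_k$ (otherwise $u_k$ is also an eigenfunction for the label $k-1$ and Courant gives $\mu\le k-1$), which removes $k=3$ and $k=6$ immediately; what remains is exactly the paper's list $k\in\{5,8,12\}$, i.e.\ $\lambda\in\{9,11,14\}$.

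The genuine gap is in how you propose to treat those three. For $\lambda=11$ your plan (a one-parameter-up-to-symmetry family of central quadrics $4(au^2+bv^2+cw^2)=a+b+c$ in $(-1,1)^3$) is exactly what the paper carries out, case by case (cylinder, cone, ellipsoid, one- and two-sheeted hyperboloids), and is feasible. But for $\lambda=14$ you explicitly do not have a proof: a six-parameter family of cubic surfaces, with the needed uniform bound $\mu\le 11$, is not controlled by the Morse/continuation sketch you give, and this is precisely the point where a direct attack becomes unmanageable. The idea you are missing is Courant's theorem with symmetry under the antipodal map $g:(x,y,z)\mapsto(\pi-x,\pi-y,\pi-z)$. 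Since $u_{\ell,m,n}\circ g=(-1)^{\ell+m+n+1}u_{\ell,m,n}$ and $\ell^2+m^2+n^2\equiv\ell+m+n\ (\mathrm{mod}\ 2)$, each eigenspace is entirely even or entirely odd, so the refined Courant bound applies to \emph{every} eigenfunction of that eigenvalue. For $\lambda=14$ the whole eigenspace is odd and $14$ is only the fifth eigenvalue of the odd subproblem, so nodal domains come in $g$-exchanged pairs and $\mu\le 10<12$; for $\lambda=9$ the eigenspace is even and $9$ is the second even eigenvalue, giving $\mu\le 4<5$. This disposes of $k=5$ and $k=12$ in two lines and leaves only $\lambda=11$ for the detailed quadric analysis. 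Without some substitute for this symmetry argument, your proposal does not close the $\lambda=14$ case, and the $\lambda=9$ case is likewise only asserted.
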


\section{Coming back to Pleijel's paper}

Outside the proof of Pleijel's theorem in $2D$,  Pleijel  \cite{Pl} (see also \cite{BeHe1} for a more detailed analysis) considers as an example the case of the square which reads
\begin{theorem}
In the case of the square the only eigenvalues which are Courant sharp for the Dirichlet Laplacian are the two first eigenvalues and the fourth one.
\end{theorem}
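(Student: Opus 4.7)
The plan is to combine Pleijel's general upper bound on the number of nodal domains with a direct analysis of the low-lying eigenfunctions of the square. On $(0,\pi)^2$ the Dirichlet spectrum is given explicitly by $\lambda_{m,n}=m^2+n^2$ with eigenfunctions $u_{m,n}(x,y)=\sin(mx)\sin(ny)$ for $m,n\geq 1$. Listing the first eigenvalues with their multiplicities yields $\lambda_1=2$, $\lambda_2=\lambda_3=5$, $\lambda_4=8$, $\lambda_5=\lambda_6=10$, $\lambda_7=\lambda_8=13$, $\lambda_9=\lambda_{10}=17$, $\lambda_{11}=18$, and so on.

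The first step is Pleijel's bound via Faber--Krahn: any nodal domain $D$ of an eigenfunction associated with $\lambda_k$ satisfies $\lambda_1(D)=\lambda_k$, whence $|D|\geq \pi j_{0,1}^2/\lambda_k$ (with $j_{0,1}$ the first positive zero of $J_0$). Summing over the $\mu(u_k)$ nodal domains of $\Omega=(0,\pi)^2$ gives
\begin{equation*}
\mu(u_k)\leq \frac{|\Omega|\,\lambda_k}{\pi j_{0,1}^2}=\frac{\pi\lambda_k}{j_{0,1}^2}.
\end{equation*}
Courant sharpness requires $\mu(u_k)=k$, so the necessary inequality $k\leq \pi\lambda_k/j_{0,1}^2\approx 0.543\,\lambda_k$ must hold. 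Running through the explicit spectrum, one checks that this inequality fails for $k=3$ and for $k=6,8,10,11,12,13,\ldots$; only the indices $k=1,2,4,5,7,9$ survive. The finiteness of the list is guaranteed by a lower Weyl-type lattice-point estimate of the form $k\geq \pi\lambda_k/4 - O(\sqrt{\lambda_k})$, which forces $k/\lambda_k\to \pi/4 > \pi/j_{0,1}^2$ as $\lambda_k$ grows.

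Next I would verify that $\lambda_1$, $\lambda_2$ and $\lambda_4$ are indeed Courant sharp by exhibiting eigenfunctions with the required number of nodal domains: $u_{1,1}$ has one, $u_{1,2}$ has two (cut by $y=\pi/2$) and $u_{2,2}$ has four (cut by $x=\pi/2$ and $y=\pi/2$). The final and most delicate step is to eliminate the remaining candidates $k=5,7,9$. In each case the eigenvalue has multiplicity two, with eigenspace spanned by $u_{m,n}$ and $u_{n,m}$ for the relevant pair $(m,n)\in\{(1,3),(2,3),(1,4)\}$, and a general eigenfunction takes the form $v=a\,u_{m,n}+b\,u_{n,m}$. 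Using Chebyshev-type identities (for instance $\sin(3t)=\sin t\,(3-4\sin^2 t)$), one factors $\sin x\,\sin y$ out of $v$ and reduces its interior nodal set to a real algebraic level curve in $(\cos x,\cos y)$, or equivalently in $(\sin^2 x,\sin^2 y)$. One then locates its critical points inside the square, analyses its boundary crossings, and counts the connected components of the complement.

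The main obstacle is precisely this topological bookkeeping: because $\mu(v)$ may jump as the ratio $a/b$ varies, one has to enumerate all qualitatively distinct nodal configurations for each of the three pairs $(m,n)$ and check that $\mu(v)<k$ throughout the whole family. Once this is done, combined with the Pleijel restriction to a short list and the explicit checks at $k=1,2,4$, one concludes that the only Courant sharp Dirichlet eigenvalues on the square are $\lambda_1$, $\lambda_2$ and $\lambda_4$.
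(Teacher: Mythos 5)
Your strategy is exactly the one the paper attributes to Pleijel (completed in B\'erard--Helffer \cite{BeHe1}): the Faber--Krahn bound $\mu(u_k)\le \pi\lambda_k/j_{0,1}^2$, a quantitative lattice-point lower bound for $N(\lambda)$ to reduce to a finite list, the case-by-case check of $k\le \pi\lambda_k/j_{0,1}^2$ together with the requirement $\lambda_{k-1}<\lambda_k$ leaving only $k=1,2,4,5,7,9$, and the explicit product eigenfunctions realizing sharpness at $k=1,2,4$. That part is correct, modulo the fact that your ``$k\ge \pi\lambda_k/4-O(\sqrt{\lambda_k})$'' must be made explicit (Pleijel's version reduces to $\lambda<68$, and the paper's Lemma \ref{Lemmalau} is the $3D$ analogue of exactly this step) before you can legitimately claim that only finitely many indices need to be inspected and that your list $\{1,2,4,5,7,9\}$ is complete.

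The genuine gap is the elimination of $k=5,7,9$. For these indices the entire content of the theorem is the assertion that no eigenfunction $a\,u_{m,n}+b\,u_{n,m}$ in the eigenspaces of $\lambda=10,13,17$ attains $5$, $7$, resp.\ $9$ nodal domains, and you only state a program (``locate critical points, analyse boundary crossings, count components'') without executing it. Nothing in the Faber--Krahn/Weyl machinery touches these cases; this is precisely the step the paper says Pleijel was ``rather sketchy'' about and which has to be treated by hand. Concretely, for $\lambda_5=10$ you must show that for every $(a,b)\neq(0,0)$ the curve $a(3-4\sin^2y)+b(3-4\sin^2x)=0$ separates the square into at most $4$ components, tracking how the nodal picture bifurcates as $a/b$ crosses its critical values; the cases $(2,3)$ and $(1,4)$ lead to cubics in $(\cos x,\cos y)$ and require the same deformation analysis. (A symmetry argument in the spirit of Section 6 of the paper, using $(x,y)\mapsto(\pi-x,\pi-y)$ and the parity of $m+n$, can shorten some of these cases, but you invoke no such argument.) Until those three counts are actually performed, the proof is incomplete at its most delicate point.
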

The proof was based on a first reduction to the analysis of the eigenvalues less than $68$ (the argument will be extended to the $(3D)$-case below and this is a quantitative version of the proof of Pleijel's theorem), then all the other eigenvalues were eliminated  using this time a more direct consequence of Faber-Krahn's inequality, except three remaining cases for which Pleijel was rather sketchy which have to be treated by hand.\\

At the end of his celebrated paper $\AA$. Pleijel wrote:\\
{\it " In order to treat, for instance the case of the free three-dimensional membrane $[0,\pi]^3$, it would be necessary to use, in a special case, the theorem quoted in \cite{CH}, p. 394\footnote{In the german version, this is p. 454 in the english version. }. This theorem which generalizes part of the Liouville-Rayleigh theorem for the string asserts that a linear combination, with constant coefficients, of the $n$ first eigenfunctions can have at most $n$ nodal domains. However, as far as I have been able to find there is no proof of this assertion in the literature."}\\
 $\AA$. Pleijel was indeed speaking of a result presented in \cite{CH} as being proved in the thesis defended in 1932 at the University of G\"ottingen  by Horst Herrmann (with R. Courant as advisor). This result was never published or confirmed
 and is now called the Courant-Herrmann conjecture \cite{GZ}. Actually, it is said in \cite{GZ} that the authors can not find any mention of the result in the thesis itself. This Courant-Herrmann conjecture was asserting that, for a given $k\in \mathbb N$,  Courant's theorem holds also for linear combinations of eigenfunctions associated with eigenvalues $\lambda_j$ with $j\leq k$. \\ 
  Pleijel is not  explicitly saying why he was needing this result but one could think that he is interested,  because  he speaks about the "free problem" (i.e.  the Neumann 
  problem), in counting the number of components of the restriction of an eigenfunction to a face of the cube $(0,\pi)^3$. 
  Looking for example to the zeroset of
 $$
(x,y,z) \mapsto a \cos x \cos y \cos n z + b \cos y \cos z \cos nx + c \cos z \cos x \cos n y\,,
$$
one gets for fixed $z=0$, a linear combination of the eigenfunctions of the square   $\cos x \cos y$, $\cos y\cos n x$ and $\cos x \cos n y$   corresponding to two different eigenspaces for the Neumann Laplacian in the square $(0,\pi)\times (0,\pi)$. We will not go further in this paper on the Neumann problem but similar questions could also occur in the Dirichlet problem and we typically meet below the eigenfunction $$
(x,y,z) \mapsto a \sin x \sin y \sin n z + b \sin y  \sin z \sin  nx + c \sin z \sin x \sin n y\,,
$$
and will be interested for example  in the intersection of its zero set with the hyperplace $\{z= \frac \pi 2\}$  inside the cube (in the case $n=3$).

\section{Reminder on Pleijel's theorem in $3D$}
Let us first prove that there are only a finite number of eigenvalues that satisfy $\mu_k:=\mu(u_k)=k$. This proof was given in dimension $n$ by B\'erard-Meyer \cite{BeMe}.

\begin{proposition}\label{lupbound}
If $\lambda_k$ is an eigenvalue of (\ref{E}) such that $\lambda_{k-1}< \lambda_k$, and $u_k$ is an associated eigenfunction then:
\begin{equation}\label{upbound}
\lambda_k^\frac 32 |\Omega| \geq \mu(u_k)   \frac 43 \pi^4\,.
\end{equation}
\end{proposition}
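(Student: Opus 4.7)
The plan is to combine the Faber–Krahn inequality in dimension three with the standard observation that on each nodal domain the restriction of $u_k$ is a first Dirichlet eigenfunction of that domain.

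First I would enumerate the nodal domains $D_1,\dots,D_{\mu_k}$ of $u_k$ and note that, on each $D_i$, the restriction $u_k|_{D_i}$ has constant sign, vanishes on $\partial D_i$, and solves $-\Delta v = \lambda_k v$. Hence $u_k|_{D_i}$ is an eigenfunction of the Dirichlet Laplacian on $D_i$ with no interior zeros, so $\lambda_1(D_i)=\lambda_k$ for every $i$. (This is where the hypothesis $\lambda_{k-1}<\lambda_k$ plays a mild bookkeeping role: it guarantees that $\mu(u_k)$ is unambiguously associated with the eigenvalue $\lambda_k$.)

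Next I would invoke the Faber–Krahn inequality in $\R^3$: among all open sets of a given volume, the ball minimizes the first Dirichlet eigenvalue. In a quantitative form this reads
\begin{equation*}
\lambda_1(D)\,|D|^{2/3} \;\geq\; \lambda_1(B_1)\,|B_1|^{2/3},
\end{equation*}
where $B_1$ is the unit ball of $\R^3$. Using the explicit values $\lambda_1(B_1)=\pi^2$ (the first Dirichlet eigenfunction on the unit ball is the radial $\sin(\pi r)/r$) and $|B_1|=\tfrac{4}{3}\pi$, applying this to each nodal domain $D_i$ yields
\begin{equation*}
\lambda_k\,|D_i|^{2/3} \;\geq\; \pi^{2}\Bigl(\tfrac{4\pi}{3}\Bigr)^{2/3},
\end{equation*}
and therefore
\begin{equation*}
|D_i|\;\geq\;\frac{4\pi^{4}}{3\,\lambda_k^{3/2}}.
\end{equation*}

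Finally, since the nodal domains are pairwise disjoint subsets of $\Omega$, summing the last inequality over $i=1,\dots,\mu_k$ gives
\begin{equation*}
|\Omega|\;\geq\;\sum_{i=1}^{\mu_k}|D_i|\;\geq\;\mu(u_k)\,\frac{4\pi^{4}}{3\,\lambda_k^{3/2}},
\end{equation*}
which after multiplying through by $\lambda_k^{3/2}$ is exactly \eqref{upbound}. The only nontrivial ingredient is the 3D Faber–Krahn inequality, which I would quote as a black box rather than reprove; the rest is additive book-keeping.
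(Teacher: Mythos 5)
Your proof is correct and follows essentially the same route as the paper: apply the 3D Faber--Krahn inequality to each nodal domain, on which $u_k$ restricts to a first Dirichlet eigenfunction, and sum the resulting volume lower bounds over the $\mu(u_k)$ disjoint domains. Your version is slightly more explicit in deriving the constant $\tfrac{4}{3}\pi^4$ from $\lambda_1(B_1)=\pi^2$ and $|B_1|=\tfrac{4}{3}\pi$, but the argument is the same.
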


\begin{proof}
Assume that the nodal set cuts the domain $\Omega$ in $\mu_k$ connected components and let us denote them by $\Omega_i, 1\leq i \leq \mu_k.$
 Since $u_k$ does not vanish inside $\Omega_i$, it is equal to its first eigenvalue and now using the $(3D)$-Faber-Krahn inequality on each component (see for example  B\'erard-Meyer \cite{BeMe}):  
$$
\lambda_k^\frac 32 |\Omega_i| \geq  \frac 43 \pi^4\, \text{ for }  1\leq i \leq \mu_k \,.
$$

Adding together all  the equations we get (\ref{upbound}).
\end{proof}
\begin{theorem}
\begin{equation}\label{pl1}
 \lim\sup_{k\rightarrow +\infty} \frac{\mu_k}{k} \leq \frac{9}{2 \pi^2}< 1\,.
 \end{equation}
In particular, there exists only a finite number of eigenvalues satisfying $\mu_k=k$.
\end{theorem}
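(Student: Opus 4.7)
The plan is to combine the Faber--Krahn-type lower bound on nodal-domain volumes provided by Proposition \ref{lupbound} with the three-dimensional Weyl asymptotics for the eigenvalue counting function, in the spirit of B\'erard--Meyer \cite{BeMe}.

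First, I would apply Proposition \ref{lupbound} (or, equivalently, invoke Faber--Krahn directly on each nodal component of $u_k$) to obtain, for any eigenfunction $u_k$ associated with $\lambda_k$,
\[
\mu(u_k) \leq \frac{3|\Omega|}{4\pi^4}\, \lambda_k^{3/2}.
\]
Next, I would invoke the Weyl law in dimension three,
\[
N(\lambda):=\#\{k : \lambda_k\leq \lambda\}\sim \frac{|\Omega|}{6\pi^2}\,\lambda^{3/2} \quad \text{as } \lambda\to+\infty,
\]
which is equivalent to $\lambda_k^{3/2}/k \to 6\pi^2/|\Omega|$. Dividing the first inequality by $k$ and passing to the $\limsup$, the $|\Omega|$-factors cancel and one obtains
\[
\limsup_{k\to+\infty}\frac{\mu_k}{k} \leq \frac{3|\Omega|}{4\pi^4}\cdot\frac{6\pi^2}{|\Omega|} = \frac{9}{2\pi^2}.
\]
Since $9/(2\pi^2)<1$, there exists $K\in\mathbb N$ such that $\mu_k<k$ for every $k\geq K$, which gives the second assertion.

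I do not foresee a genuine obstacle in this argument. The only point worth a brief remark is that Proposition \ref{lupbound} is stated under the strict inequality $\lambda_{k-1}<\lambda_k$; however, its proof only uses Faber--Krahn applied to each nodal component, and that hypothesis plays no role in the resulting estimate, so the inequality $\mu(u_k)\leq \tfrac{3|\Omega|}{4\pi^4}\lambda_k^{3/2}$ is valid for every eigenpair. The content of the theorem is therefore the numerical observation that the $3D$ Faber--Krahn constant $\tfrac{4}{3}\pi^4$ and the $3D$ Weyl constant $6\pi^2$ combine, through the cancellation of $|\Omega|$, into a universal ratio $9/(2\pi^2)$ that happens to be strictly less than one.
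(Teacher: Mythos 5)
Your proposal is correct and follows essentially the same route as the paper: the Faber--Krahn bound of Proposition \ref{lupbound} combined with the three-dimensional Weyl law, with the $|\Omega|$-factors cancelling to give the constant $9/(2\pi^2)$. Your side remark that the hypothesis $\lambda_{k-1}<\lambda_k$ is not actually needed for the volume estimate is accurate and tidies up a point the paper leaves implicit.
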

\begin{proof}
We start from  the Weyl's asymptotics for the counting function 
\begin{equation}\label{defNlambda}
N(\lambda):=\#\{ k \,, \, \lambda_k < \lambda\},
\end{equation}
which reads
\begin{equation}
N(\lambda) \sim \frac {1}{6 \pi^2}|\Omega|  \lambda^\frac 32 \,.
\end{equation}

For an eigenvalue $\lambda_k$ such that $\lambda_{k-1}<\lambda_k$, we have $N(\lambda_k)=k-1\,.$
Then  from 
$$\lambda_k^\frac 32 \sim \frac {6 \pi^2}{|\Omega|} k  $$
together with (\ref{upbound}), we get \eqref{pl1}.
\end{proof}

\begin{remark}
It is clear from \eqref{pl1} that we cannot have an infinite number of eigenvalues satisfying $\mu_k=k$.
\end{remark}

\section{The case of the cube}
Let us consider  the cube $(0,\pi)\times(0,\pi)\times (0,\pi)$ for which an orthogonal basis of eigenfunctions for the Dirichlet problem is given by: 
$$\left\{
\begin{array}{rl}
u_{\ell,m,n}(x,y,z)& =\sin(\ell x) \cdot  \sin(m y) \cdot \sin(n z)\,, \\
\lambda_{\ell,m,n}&=\ell^2+ m^2+n^2\,,
\end{array}
\right.
$$
 for $\ell, m,n \geq 1$.\\
Applying Proposition \ref{lupbound} for this domain, we get
\begin{proposition}\label{lupboundrect}
If $u_k$ is an eigenfunction associated with $\lambda_k$ such that $u_k$ has $k$ nodal domains and  if $\lambda_{k-1} < \lambda_k$ we have:
\begin{equation}\label{upbound2}
\frac{{\lambda_k}^{\frac 32}} k \geq \frac 43 \pi\,.
\end{equation}
\end{proposition}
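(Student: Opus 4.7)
The plan is essentially a direct specialization of Proposition~\ref{lupbound} to the cube. The only two ingredients are the volume of the domain and the Courant-sharp hypothesis. For $\Omega = (0,\pi)^3$ one has $|\Omega| = \pi^3$, and the assumption that $u_k$ has exactly $k$ nodal domains means $\mu(u_k) = k$. Substituting both into the inequality
$$\lambda_k^{3/2}\,|\Omega| \;\geq\; \mu(u_k)\,\frac{4}{3}\pi^4$$
provided by Proposition~\ref{lupbound} (which applies since we also assume $\lambda_{k-1} < \lambda_k$) yields
$$\lambda_k^{3/2}\,\pi^3 \;\geq\; k\,\frac{4}{3}\pi^4.$$
Dividing both sides by $k\pi^3$ gives the stated bound $\lambda_k^{3/2}/k \geq \tfrac{4}{3}\pi$.

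There is no genuine obstacle: the proof is a single algebraic manipulation following from Proposition~\ref{lupbound}, and the only thing to verify is that the hypotheses of that proposition are transmitted unchanged (simplicity at index $k$, and the existence of an eigenfunction attaining the maximal number $k$ of nodal domains). The interest of the statement is not the proof itself but its future use: it provides an \emph{a priori} lower bound on $\lambda_k^{3/2}/k$ that any candidate Courant-sharp eigenvalue on the cube must satisfy, and combined with Weyl-type upper estimates on $k = N(\lambda_k)+1$ in terms of $\lambda_k$ this will cut down the list of possible Courant-sharp eigenvalues to a finite, explicit set to be examined by hand.
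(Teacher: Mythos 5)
Your proof is correct and is exactly the paper's argument: the authors simply state that Proposition~\ref{lupboundrect} follows by applying Proposition~\ref{lupbound} to the cube, and your computation with $|\Omega|=\pi^3$ and $\mu(u_k)=k$ supplies the one-line algebra they leave implicit. Nothing to add.
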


Here we will try to find a lower bound for the number $N(\lambda)$, since we know the $\lambda$'s are equal to $\ell^2+ m^2+n^2$ where  $\ell, m, n$ are integers, so we need to count the number of the lattice points of $\mathbb{R}^3$ inside the sphere of radius $\sqrt{\lambda}$.  

\begin{lemma}\label{Lemmalau}
If $\lambda \geq 3$, then 
\begin{equation}\label{lauzz}
N(\lambda) >   \frac \pi 6 \lambda^\frac 32 - \frac{3\pi}{4} \lambda + 3  \sqrt{\lambda -2} -1\,.
\end{equation}
\end{lemma}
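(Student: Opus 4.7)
The plan is to count $N(\lambda)$ by a cube-covering argument in the positive octant. To each lattice point $(\ell,m,n)\in\mathbb{Z}_{\geq 1}^3$ with $\ell^2+m^2+n^2<\lambda$ (so that it contributes to $N(\lambda)$) I would associate the unit cube $C_{\ell,m,n}:=[\ell-1,\ell]\times[m-1,m]\times[n-1,n]$; these cubes have disjoint interiors and lie in the closed positive octant, so their union has volume exactly $N(\lambda)$. It therefore suffices to exhibit a nice region sitting inside this union.

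Any point $(x,y,z)$ with $x,y,z>0$ lies in the unique cube $C_{\lceil x\rceil,\lceil y\rceil,\lceil z\rceil}$ of the tiling, and since $\lceil t\rceil<t+1$ for every $t>0$ I get
\begin{equation*}
\lceil x\rceil^2+\lceil y\rceil^2+\lceil z\rceil^2\ <\ (x+1)^2+(y+1)^2+(z+1)^2.
\end{equation*}
Consequently, whenever $(x+1)^2+(y+1)^2+(z+1)^2\leq \lambda$ the associated lattice point is counted by $N(\lambda)$, giving
\begin{equation*}
N(\lambda)\ \geq\ \bigl|\{(x,y,z)\in[0,\infty)^3:\,(x+1)^2+(y+1)^2+(z+1)^2\leq \lambda\}\bigr|.
\end{equation*}

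Translating by $(1,1,1)$, the right-hand side is the volume $V$ of $R=\{(u,v,w):u,v,w\geq 1,\ u^2+v^2+w^2\leq \lambda\}$. I would compute $V$ by inclusion--exclusion, starting from the positive octant of the ball $B_\lambda^+=\{u,v,w\geq 0:u^2+v^2+w^2\leq \lambda\}$ of volume $\frac{\pi}{6}\lambda^{3/2}$, and carving out the slabs $A_u,A_v,A_w$ where the corresponding coordinate lies in $[0,1)$:
\begin{equation*}
V=|B_\lambda^+|-3|A_u|+3|A_u\cap A_v|-|A_u\cap A_v\cap A_w|.
\end{equation*}
Explicit evaluation gives $|A_u|=\int_0^1 \frac{\pi(\lambda-u^2)}{4}\,du=\frac{\pi\lambda}{4}-\frac{\pi}{12}$. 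The pairwise piece satisfies $|A_u\cap A_v|=\int_0^1\!\int_0^1\sqrt{\lambda-u^2-v^2}\,du\,dv\geq\sqrt{\lambda-2}$, and for $\lambda\geq 3$ the whole cube $[0,1]^3$ is inside $B_\lambda^+$, so $|A_u\cap A_v\cap A_w|=1$.

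Assembling the bounds yields
\begin{equation*}
N(\lambda)\ \geq\ V\ \geq\ \frac{\pi}{6}\lambda^{3/2}-\frac{3\pi}{4}\lambda+\frac{\pi}{4}+3\sqrt{\lambda-2}-1,
\end{equation*}
which beats the claimed lower bound by a comfortable $\pi/4$, yielding strict inequality. There is no substantive obstacle in this argument: the inequality $\lceil t\rceil<t+1$ is the one observation that turns the cube tiling into a lower bound, and the rest is inclusion--exclusion together with the trivial pointwise estimate $\sqrt{\lambda-u^2-v^2}\geq\sqrt{\lambda-2}$ on $[0,1]^2$. The hypothesis $\lambda\geq 3$ enters only to make $\sqrt{\lambda-2}$ real and to give the clean equality $|A_u\cap A_v\cap A_w|=1$.
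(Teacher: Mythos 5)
Your proof is correct and follows essentially the same route as the paper's appendix: tile the positive octant by unit cubes indexed by the lattice points, use the ceiling map to show that the shifted octant of the ball is covered by the cubes of counted lattice points, translate by $(1,1,1)$, and finish by inclusion--exclusion with the same three estimates. The only (welcome) difference is that by keeping the exact slab volume $\frac{\pi\lambda}{4}-\frac{\pi}{12}$ you retain a margin of $\frac{\pi}{4}$, which gives the strict inequality directly and lets you bypass the paper's limiting argument for the case where $\lambda$ is itself an eigenvalue.
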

The proof is given in the appendix.

\begin{lemma} If $u_k$ is an eigenfunction associated with $\lambda_k$ such that $u_k$ has $k$ nodal domains we have:
\begin{equation}
\left( \frac{3}{4\pi} - \frac\pi 6 \right) \lambda^{\frac 32} + \frac{3\pi}4\lambda- 3\sqrt{\lambda } +3 > 0\,.
\end{equation}
\end{lemma}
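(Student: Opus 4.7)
The plan is to combine the two estimates already in hand: an upper bound on $k$ coming from the Faber--Krahn-based Proposition~\ref{lupboundrect}, and a lower bound on $N(\lambda_k)=k-1$ coming from the explicit lattice-point count in Lemma~\ref{Lemmalau}. The hypothesis that $u_k$ has $k$ nodal domains (implicitly together with $\lambda_{k-1}<\lambda_k$, without which the notion of ``Courant sharp'' at index $k$ is not meaningful) is exactly what feeds both inputs.

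Concretely, Proposition~\ref{lupboundrect} gives
$$k \le \frac{3}{4\pi}\,\lambda_k^{3/2}.$$
On the other hand, since $\lambda_{k-1}<\lambda_k$ one has $N(\lambda_k)=k-1$, and since $\lambda_k \ge \lambda_1=3$, Lemma~\ref{Lemmalau} applies and yields
$$k \;>\; \frac{\pi}{6}\lambda_k^{3/2} - \frac{3\pi}{4}\lambda_k + 3\sqrt{\lambda_k-2}.$$
Eliminating $k$ between these two inequalities and rearranging gives the intermediate bound
$$\Bigl(\frac{3}{4\pi}-\frac{\pi}{6}\Bigr)\lambda_k^{3/2} + \frac{3\pi}{4}\lambda_k - 3\sqrt{\lambda_k-2} \;>\; 0.$$

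The only remaining step is a cosmetic replacement of $\sqrt{\lambda_k-2}$ by $\sqrt{\lambda_k}-1$. I would invoke the elementary bound $\sqrt{\lambda-2}\ge \sqrt{\lambda}-1$, which (after squaring) is equivalent to $\lambda\ge 9/4$ and is therefore certainly valid for $\lambda_k\ge 3$. This gives $-3\sqrt{\lambda_k-2}\le -3\sqrt{\lambda_k}+3$, weakening the preceding display into exactly the form stated in the lemma.

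There is no real obstacle in the argument; it is essentially bookkeeping. The qualitative point to emphasize is that the leading coefficient $\frac{3}{4\pi}-\frac{\pi}{6}$ is negative, so the left-hand side eventually becomes negative for $\lambda_k$ sufficiently large. This is precisely the mechanism by which the lemma will, in the sequel, confine the possibly Courant-sharp eigenvalues of the cube to an explicit finite range.
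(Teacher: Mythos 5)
Your argument is correct and takes essentially the same route as the paper: combine Proposition \ref{lupboundrect} with Lemma \ref{Lemmalau} to eliminate $k$, then absorb the difference between $\sqrt{\lambda-2}$ and $\sqrt{\lambda}-1$ using the elementary inequality $\sqrt{\lambda-2}\ge\sqrt{\lambda}-1$ for $\lambda\ge 3$ (the paper checks this via $\sqrt{\lambda-2}-\sqrt{\lambda}=-2/(\sqrt{\lambda}+\sqrt{\lambda-2})>-1$ rather than by squaring, which is immaterial). The only minor difference is that the paper explicitly derives $\lambda_{k-1}<\lambda_k$ from Courant's theorem instead of treating it as an implicit hypothesis.
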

\begin{proof}
First by Courant theorem, we have necessarily $\lambda_{k-1} < \lambda_k\,$. \\
Applying \eqref{lauzz}, we have 
$$ 
k-1 = N(\lambda)>\frac\pi 6 \lambda^{\frac 32} -\frac{3\pi}4\lambda +3\sqrt{\lambda -2}-1\,.
$$ 
i.e.
\begin{equation}\label{minoration}
k >\frac\pi 6 \lambda^{\frac 32} -\frac{3\pi}4\lambda +3\sqrt{\lambda -2}\,.
\end{equation}
Together with (\ref{upbound2}), this implies:
\begin{equation*}
\left( \frac{3}{4\pi} - \frac\pi 6 \right) \lambda^{\frac 32} + \frac{3\pi}4\lambda > 3\sqrt{\lambda -2}\,.
\end{equation*}
One immediately sees that, for $\lambda \geq 3\,$,
 $$
 \sqrt{\lambda -2} -\sqrt{\lambda}= -\frac{2}{\sqrt{\lambda} +\sqrt{\lambda -2} }\geq -\frac{2}{1+\sqrt{3}}>-1\,.
 $$
\end{proof}
Now setting $\mu =\sqrt{\lambda}$ we get the third order inequation:
\begin{equation*}
\left( \frac{3}{4\pi} - \frac\pi 6 \right) \mu^3 + \frac{3\pi}4\mu^2- 3\mu +3 > 0\,.
\end{equation*}

Using a calculator we can see that the only real root of the equation 
$\left( \frac{3}{4\pi} - \frac\pi 6 \right) \mu^3 + \frac{3\pi}4\mu^2- 3\mu +3 = 0$ is $\mu=6.97836\,$. This gives that the inequality is true only $0<\mu<6.97836\,,$ hence for $\lambda < 48.7\,.$
So we have finally proved:
\begin{proposition}
 If $u_k$ is an eigenfunction associated with $\lambda_k$ such that $u_k$ has $k$ nodal domains and  if $\lambda_{k-1} < \lambda_k$ , we have:
\begin{equation}
\lambda_k \leq 48\,.
\end{equation}
\end{proposition}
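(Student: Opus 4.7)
The plan is to combine the Faber--Krahn-based upper bound on $\lambda_k^{3/2}/k$ from Proposition~\ref{lupboundrect} with the lattice-count lower bound on $N(\lambda_k)$ from Lemma~\ref{Lemmalau}. Since $u_k$ has $k$ nodal domains, Courant's theorem forces $\lambda_{k-1}<\lambda_k$, so that $N(\lambda_k)=k-1$; the two inequalities can then be chained to produce an explicit scalar inequality in $\lambda_k$ alone.

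More precisely, first I would rewrite Proposition~\ref{lupboundrect} as $k\leq \tfrac{3}{4\pi}\lambda_k^{3/2}$. Next, feeding $N(\lambda_k)=k-1$ into Lemma~\ref{Lemmalau}, I would obtain
$$
k>\frac{\pi}{6}\lambda_k^{3/2}-\frac{3\pi}{4}\lambda_k+3\sqrt{\lambda_k-2}\,.
$$
Combining both bounds yields
$$
\left(\frac{3}{4\pi}-\frac{\pi}{6}\right)\lambda_k^{3/2}+\frac{3\pi}{4}\lambda_k-3\sqrt{\lambda_k-2}>0\,.
$$
To convert this into a clean polynomial condition, I would use the elementary bound $\sqrt{\lambda-2}\geq \sqrt{\lambda}-1$ valid for $\lambda\geq 3$, which follows from $(\sqrt{\lambda}-\sqrt{\lambda-2})(\sqrt{\lambda}+\sqrt{\lambda-2})=2$ together with $\sqrt{\lambda}+\sqrt{\lambda-2}\geq 1+\sqrt{3}>2$. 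Setting $\mu=\sqrt{\lambda_k}$ then delivers the cubic inequality
$$
P(\mu):=\left(\frac{3}{4\pi}-\frac{\pi}{6}\right)\mu^3+\frac{3\pi}{4}\mu^2-3\mu+3>0\,.
$$

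The leading coefficient of $P$ is negative (this is exactly the inequality $9/(2\pi^2)<1$ underlying Pleijel's theorem in $3D$ from Section~3), and $P(0)=3>0$, so $P$ has a unique positive real root $\mu_0$. A numerical evaluation gives $\mu_0\approx 6.97836$, so the cubic inequality forces $\lambda_k<\mu_0^2<48.7$; since $\lambda_k$ is a sum of three squares of positive integers and hence an integer, this yields $\lambda_k\leq 48$.

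There is no real obstacle here, as all the ingredients have been assembled in the preceding lemmas. The only minor subtlety is the replacement of $\sqrt{\lambda-2}$ by $\sqrt{\lambda}-1$, which one must check does not weaken the bound too much; since the gap between the root of the genuine cubic and the critical value $\sqrt{48}\approx 6.928$ is comfortable, this step costs essentially nothing.
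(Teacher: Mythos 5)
Your proposal is correct and follows essentially the same route as the paper: it chains Proposition~\ref{lupboundrect} with Lemma~\ref{Lemmalau}, replaces $\sqrt{\lambda-2}$ by $\sqrt{\lambda}-1$ via the same conjugate-expression estimate, and reduces to the same cubic in $\mu=\sqrt{\lambda_k}$ with root $\approx 6.97836$, concluding $\lambda_k\leq 48$ by integrality. No discrepancies worth noting.
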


\section{The list}

In this section, we establish the list of the eigenvalues which are less than $48$ and determine which of these eigenvalues satisfy the necessary condition \eqref{upbound2} for being Courant sharp.\\

\begin{tabular}{|p{10cm}|p{3cm}|p{1cm}|}
\hline
 k& $(\ell,m,n) $   & $\lambda_k$  
\\
\hline
$\lambda_1$&(1,1,1)&3\\
\hline
$\lambda_2=\lambda_3=\lambda_4$& (1,1,2) &6\\
\hline
$\lambda_5=\lambda_6=\lambda_7$&$(1,2,2)$&9\\
\hline
$\lambda_8=\lambda_9=\lambda_{10}$&(1,1,3)&11\\
\hline
 $\lambda_{11}$&(2,2,2)&12\\
\hline
$\lambda_{12}=\lambda_{13}=\lambda_{14}=\lambda_{15}=\lambda_{16}=\lambda_{17}$&(1,2,3)&14\\
\hline
$\lambda_{18}=\lambda_{19}=\lambda_{20}$ &(2,2,3)&17\\
\hline
$\lambda_{21}=\lambda_{22}=\lambda_{23}$ &(1,1,4)&18\\
\hline
$\lambda_{24}=\lambda_{25}=\lambda_{26}$ &(1,3,3)&19\\
\hline
$\lambda_{27}=\lambda_{28}=\lambda_{29}=\lambda_{30}=\lambda_{31}=\lambda_{32} $&(1,2,4)&21\\
\hline
$\lambda_{33}=\lambda_{34}=\lambda_{35} $&(2,3,3)&22\\
\hline
$\lambda_{36}=\lambda_{37}=\lambda_{38}$&$(2,2,4)$&24\\
\hline
$\lambda_{39}=\lambda_{40}=\lambda_{41}=\lambda_{42}=\lambda_{43}=\lambda_{44}$&$(1,3,4)$&26\\
\hline
$\lambda_{45}=\lambda_{46}=\lambda_{47}=\lambda_{48}$&$(3,3,3)\ \&\ (1,1,5)$&27\\
\hline
$\lambda_{49}=\lambda_{50}=\lambda_{51}=\lambda_{52}=\lambda_{53}=\lambda_{54}$&$(2,3,4)$&29\\
\hline
$\lambda_{55}=\lambda_{56}=\lambda_{57}=\lambda_{58}=\lambda_{59}=\lambda_{60}$&$(1,2,5)$&30\\
\hline
$\lambda_{61}=\lambda_{62}=\lambda_{63}=\lambda_{64}=\lambda_{65}=\lambda_{66}$& $(1,4,4)\ \&\ (2,2,5)$&33\\
\hline
$\lambda_{67}=\lambda_{68}=\lambda_{69}$&$(3,3,4)$&34\\
\hline
$\lambda_{70}=\lambda_{71}=\lambda_{72}=\lambda_{73}=\lambda_{74}=\lambda_{75}$&$(1,3,5)$&35\\
\hline
$\lambda_{76}=\lambda_{77}=\lambda_{78} $ &$(2,4,4)$&36\\
\hline
$\lambda_{79}=\lambda_{80}=\lambda_{81}=\lambda_{82}=\lambda_{83}=\lambda_{84}=\lambda_{85}=\lambda_{86}=\lambda_{87}$&$(1,1,6)\ \&\ (2,3,5)$&38\\
\hline
$\lambda_{88}=\lambda_{89}=\lambda_{90}=\lambda_{91}=\lambda_{92}=\lambda_{93}=\lambda_{94}=\lambda_{95}=\lambda_{96}$&$(1,2,6)\ \&\ (3,4,4)$&41\\
\hline
 $\lambda_{97}=\lambda_{98}=\lambda_{99}=\lambda_{100}=\lambda_{101}=\lambda_{102}$ & $(1,4,5)$&42\\
\hline
$\lambda_{103}=\lambda_{104}=\lambda_{105}$& $(3,3,5)$&43\\
\hline
$\lambda_{106}=\lambda_{107}=\lambda_{108}$ & $(2,2,6)$&44\\
\hline
$\lambda_{109}=\lambda_{110}=\lambda_{111}=\lambda_{112}=\lambda_{113}=\lambda_{114}$&$(2,4,5)$&45\\
\hline
$\lambda_{115}=\lambda_{116}=\lambda_{117}=\lambda_{118}=\lambda_{119}=\lambda_{120}$& $(1,3,6)$&46\\
\hline
 $\lambda_{121}$&$(4,4,4)$&48\\
\hline
\end{tabular}
\\~\\

Coming back to  the consequences of Faber-Krahn's inequality, one can check that among all the values on the table, the only eigenvalues that satisfy  inequality (\ref{upbound2}) and $\lambda_{k-1}<\lambda_k$ are $\lambda_1\,$, $\lambda_2\,$, $\lambda_5\,$, $\lambda_8\,$ and $\lambda_{12}\,$.
\begin{proposition}
The only eigenvalues which can be "Courant sharp" are the eigenvalues $\lambda_k$ with $k=1,2, 5, 8$ and $12$.
\end{proposition}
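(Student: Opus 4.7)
The strategy is a direct application of the Faber--Krahn necessary condition \eqref{upbound2} of Proposition \ref{lupboundrect} to each relevant entry of the table established in the previous section. Recall the condition reads
\begin{equation*}
\frac{\lambda_k^{3/2}}{k} \geq \frac{4\pi}{3} \approx 4.189,
\end{equation*}
and must be imposed only at indices $k$ satisfying $\lambda_{k-1} < \lambda_k$, since Courant sharpness requires that strict inequality. By inspection of the multiplicity pattern recorded in the table, such indices are precisely the ``jump indices'' that mark the first appearance of each distinct eigenvalue; inside a block of equal eigenvalues, all indices except the smallest are automatically excluded.

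The plan is then to march through the jump indices up to $k=121$ (which covers all eigenvalues $\lambda \leq 48$) and, for each one, compute the ratio $\lambda_k^{3/2}/k$ and compare it with $4\pi/3$. The small values $k=1,2,5,8,12$ produce ratios $3\sqrt{3},\ 3\sqrt{6},\ 27/5,\ 11\sqrt{11}/8,\ 14\sqrt{14}/12$, all visibly larger than $4\pi/3$, so these indices pass. The ratio first drops below the threshold at $k=11$ (since $12^{3/2}/11\approx 3.78$), and a short arithmetic check shows it remains below $4\pi/3$ at every subsequent jump index up to $k=121$. This trend is consistent with Weyl's law, which gives $\lambda_k^{3/2}/k \to 6\pi^2/|\Omega| = 6/\pi < 4\pi/3$.

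A minor subtlety that should be flagged is that the sequence $\lambda_k^{3/2}/k$ is \emph{not} monotone: it dips below the threshold at $k=11$ and then rises back above at $k=12$ before falling definitively. Consequently one cannot stop at the first failure, and the verification genuinely requires checking every jump index individually. Beyond this bookkeeping, which is entirely elementary once the table is available, the argument needs nothing more than Proposition \ref{lupboundrect}, and the proposition follows. The main obstacle is therefore purely organizational --- ensuring that no jump index in the range is overlooked --- rather than conceptual.
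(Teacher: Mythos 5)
Your proposal is correct and is essentially the paper's own argument: the paper simply asserts that one checks inequality \eqref{upbound2} together with the condition $\lambda_{k-1}<\lambda_k$ against the table, and only $k=1,2,5,8,12$ survive. One small wording slip to fix: the ratio $\lambda_k^{3/2}/k$ does not stay below $4\pi/3$ at \emph{every} jump index after $k=11$ --- it climbs back above the threshold at $k=12$ (as you yourself correctly note two sentences later), so that claim should read ``at every jump index after $k=12$.''
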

As $ \lambda_1$ and $\lambda_2$ are Courant sharp, the only remaining cases to analyze correspond to $k=5,8,12$.

In the next section we will by a finer analysis involving symmetries eliminate other cases.

\section{Courant theorem with symmetry}
We first recall some generalities which come back to Leydold \cite{Ley1}, and were used in various contexts \cite{Ley2,Ley3, HPS,HHOT2}. Suppose that there exists an isometry $g$ such that $g (\Omega) = \Omega$ and $g^2 = {\sf Id}\,$. Then $g$ acts naturally on $L^2(\Omega)$ by $gu ({\bf x}) = u(g^{-1} {\bf x})\,,\, \forall {\bf x} \in \Omega\,,$ and one can naturally define an orthogonal decomposition of $L^2(\Omega)$
$$ L^2(\Omega)= L^2_{\sf odd} \oplus L^2_{\sf even}\,,$$
where by definition $L^2_{\sf odd}= \{u\in L^2 \,,\, gu =-u\}$, resp. $L^2_{\sf even}= \{u\in L^2 \,,\, gu =u\}$. These two spaces are left invariant by the Laplacian and one can consider separately the spectrum of the two restrictions. Let us explain for the ``odd case'' what could be a Courant theorem with symmetry. If $u$ is an eigenfunction in $L^2_{\sf odd}$ associated with $\lambda$, we see immediately that the nodal domains appear by pairs (exchanged by $g$) and following the proof of the standard Courant theorem we see that if $\lambda = \lambda^{\sf odd}_j$ for some $j$ (that is the $j$-th eigenvalue in the odd space), then the number $\mu(u)$ of nodal domains of $u$ satisfies $\mu(u) \leq j$.\\ 
We get a similar result for the "even" case (but in this case a nodal domain $D$ is either $g$-invariant or $g(D)$ is a distinct nodal domain).\\ 
These remarks may lead to improvements when each eigenspace has a specific symmetry. As we shall see, this will be the case  for the cube with the map $(x,y,z)\mapsto (\pi-x,\pi -y,\pi-z)$.\\
We observe indeed  that
$$
u_{\ell,m,n}(\pi -x,\pi -y,\pi -z) =  (-1)^{\ell + m + n +1} u_{\ell,m,n} (x,y)\,,
$$
and that
$$
\ell^2 +m^2 + n^2 \equiv \ell + m+ n \, ({\rm mod}. 2)\,.
$$
Hence, for a given eigenvalue the whole eigenspace is  even if $\ell +m+n$ is odd and  odd if $\ell+m+n$ is even. Equivalently, the whole eigenspace is  even if the eigenvalue is odd and even if the eigenvalue 
 is odd.\\

{\bf Application.}\\
{\bf  $\lambda_5$ is not Courant sharp.}  The eigenspace  associated with $\lambda_5=9$ is even. This is the second one (in this even space). Hence it should have less than four nodal domains by Courant's theorem with symmetry and has labelling $5$.\\

{\bf  $\lambda_{12}$ is not Courant sharp.}

 $\lambda_{12}=14$ is the fifth eigenvalue in the odd space with respect to $\sigma$. It should has less than $10$ nodal domains  and has labelling $12$.

\section{The remaining value: $k=8$}
\subsection{Main result}
The proof of our main theorem relies now on the analysis of the last case which is the object of the next proposition.
\begin{proposition}
In the eigenspace associated with $\lambda_8$ the eigenfunctions have either $2$, $3$ or $4$ nodal domains. In particular $\lambda_8$ cannot be Courant sharp.
\end{proposition}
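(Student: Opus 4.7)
The plan is to factor any eigenfunction $u = a u_{1,1,3} + b u_{1,3,1} + c u_{3,1,1}$ associated with $\lambda_8 = 11$ using the triple-angle identity $\sin(3t) = \sin(t)(3 - 4\sin^2 t)$:
\[ u(x,y,z) = \sin x \sin y \sin z \cdot \Phi(x,y,z), \]
where $\Phi(x,y,z) = 3(a+b+c) - 4(a\sin^2 z + b\sin^2 y + c\sin^2 x)$. Since $\sin x \sin y \sin z > 0$ on $(0,\pi)^3$, the interior nodal set of $u$ coincides with $\{\Phi = 0\}$. The key observation is that $\Phi$ factors through the map $\phi:(x,y,z) \mapsto (X,Y,Z):=(\sin^2 x, \sin^2 y, \sin^2 z) \in [0,1]^3$ as an affine function $Q(X,Y,Z) = 3(a+b+c) - 4(cX + bY + aZ)$. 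Hence the nodal surface, expressed in $(X,Y,Z)$-coordinates, is a plane $P$ cutting $[0,1]^3$ into two open convex regions $C_+$ and $C_-$, both nonempty since $u$ must change sign.

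I would then exploit the $\mathbb{Z}_2^3$ reflection symmetry $\sigma_i : x_i \mapsto \pi - x_i$, under which every basis function $u_{\ell,m,n}$ with $\ell,m,n$ odd is invariant; hence so is the whole eigenspace. This partitions $(0,\pi)^3$ into $8$ sub-cubes on each of which $\phi$ is a diffeomorphism to $(0,1)^3$, with adjacent sub-cubes glued along an internal face $x_i = \pi/2$ corresponding to $X_i = 1$. Let $T_\pm \subseteq \{1,2,3\}$ denote the set of indices $i$ for which $\overline{C_\pm}$ meets $\{X_i = 1\}$ in a $2$-dimensional set. A direct combinatorial count on the induced gluing graph on the $8$ sub-cubes shows that the copies of $C_\pm$ merge into $2^{3-|T_\pm|}$ connected components in $(0,\pi)^3$, yielding the formula
\[ \mu(u) = 2^{3-|T_+|} + 2^{3-|T_-|}. \]

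The heart of the proof is then to show that $|T_+|, |T_-| \geq 2$. When $a+b+c \neq 0$, the vertex $(1,1,1)$ lies in $C_+$ or $C_-$ (since $Q(1,1,1) = -(a+b+c)$), giving $|T_\pm| = 3$ on that side automatically. The remaining inequality is proved by contradiction: assuming $Q \leq 0$ on two of the three internal faces, one sums well-chosen pairs of the $8$ vertex values of $Q$ to obtain, e.g., $3a + b + c \leq 0$ and $-a + b + c \leq 0$, which combined with $a+b+c > 0$ force both $a < 0$ and $a > 0$. The edge case $a+b+c = 0$, where $P$ passes through both $(0,0,0)$ and $(1,1,1)$, is handled analogously: the $8$ vertex values of $Q$ simplify to $\{0, \pm 4a, \pm 4b, \pm 4c\}$, and $Q \leq 0$ on any two internal faces immediately forces $(a,b,c) = (0,0,0)$.

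Combining everything, the possible pairs $(|T_+|, |T_-|)$ are $(3,3)$, $(3,2)$, $(2,3)$, $(2,2)$, giving $\mu(u) \in \{2, 3, 4\}$; in particular $\mu(u) \leq 4 < 8$, so $\lambda_8$ cannot be Courant sharp. The main obstacle will be the sign-counting argument that establishes $|T_\pm| \geq 2$: one must rule out the possibility that the positive or negative region of the plane cut hides in a corner of $[0,1]^3$ touching only one internal face, and this relies on a careful enumeration of linear inequalities on $(a,b,c)$ arising from the vertex values of $Q$.
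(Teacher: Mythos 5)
Your argument is correct, and it takes a genuinely different route from the paper. The paper substitutes $u=\cos x$, $v=\cos y$, $w=\cos z$ (a global diffeomorphism of the open cube onto $(-1,1)^3$), so the nodal set becomes the quadric $4(au^2+bv^2+cw^2)=a+b+c$, and then runs a case-by-case geometric discussion according to the type of that quadric (cylinder, double cone, ellipsoid, one- or two-sheeted hyperboloid), examining in each case how the surface meets the faces and edges of the cube. You instead use $X_i=\sin^2 x_i$, which linearizes the problem completely --- the nodal set becomes a plane in $[0,1]^3$ --- at the price of the map being $8$-to-$1$; you recover the count by gluing the eight sub-cubes along the internal faces $x_i=\pi/2$ and reading connectivity off the hypercube graph, which yields the clean formula $\mu(u)=2^{3-|T_+|}+2^{3-|T_-|}$. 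What your approach buys is uniformity: everything reduces to the signs of an affine function at the eight vertices of $[0,1]^3$; the key bound $|T_\pm|\ge 2$ does close as you claim (assuming $Q\le 0$ on two closed faces and summing vertex values forces $a+b+c\le 0$ and then $(a,b,c)=(0,0,0)$, with no need to separate the case $a+b+c=0$); and all of the paper's sub-cases, including the ellipsoid transition at $a+b=\frac14$ and the three components of the double cone, drop out of the formula. What the paper's approach buys is an explicit geometric picture of each nodal surface. Two points you should make explicit in a write-up: (i) copies of $C_\pm$ in adjacent sub-cubes are joined exactly when the open half-space meets the corresponding closed face (equivalently, when some vertex of that face has the right sign of $Q$), and connections through the lower-dimensional strata $\{x_i=x_j=\pi/2\}$ create no identifications beyond those already present in the graph; (ii) the plane is never a coordinate face $\{X_i=1\}$, so the ``$2$-dimensional intersection'' criterion is indeed equivalent to the vertex-sign test. (Also, with your ordering of the basis the affine function should read $3(a+b+c)-4(bX+cY+aZ)$ rather than $-4(cX+bY+aZ)$, but this permutation is immaterial to the argument.)
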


\subsection{Preliminaries}~\\
For the value $\lambda_8 =11$ we have to analyze the zeroset of 
$$
\Phi_{a,b,c} (x,y,z):=a \sin x \sin y \sin 3z + b \sin y \sin z \sin 3x + c \sin z \sin x \sin 3y\,,
$$
for $(a,b,c) \neq (0,0,0)$\,.\\

This looks nice because we can divide by $\sin x \sin y \sin z$ and by making the  change of coordinates $u=\cos x$, $v=\cos y$, $w=\cos z$, we get for the zero set of $\Phi_{a,b,c}$  in the new coordinates 
 a quadric surface $\mathcal{Q}_{a,b,c}$ to analyze in the cube  $\mathcal{C}=(-1,1)^3\,$, whose equation is
\begin{equation*}
( \mathcal{Q}_{a,b,c})  \quad 4 \, (a u^2+ b v^2 + c w^2) - (a+b+c) =0\,,  
 \end{equation*}
for $(a,b,c) \neq (0,0,0)$\,.\\

When $a+b+c \neq 0$, we immediately see that there are no critical points inside the cube, so the nodal set is simply an hypersurface (cylinder, ellipsoid or hyperboloid with one or two sheets). In this case, this is  the analysis at the six faces of the cube  which will be decisive for analyzing possible changes  in the number of connected components. 
 In the case when $a+b+c =0$, we have a double cone with a unique critical point at $(0,0,0)$. \\
 In the next subsections, we discuss the different cases.\\

\subsection{Cylinder}~\\
This corresponds to the case  $abc=0\,$. We can use the (2D)- analysis as done in \cite{BeHe1}. It is known that the number of nodal domains can only be 2,3 or 4 (See Section 3.1 and  figure 2.1 there). See figure \ref{fig:cyl}.\\

\begin{figure}[h]
\begin{subfigure}{.3\textwidth}
\includegraphics[width=.8\linewidth]{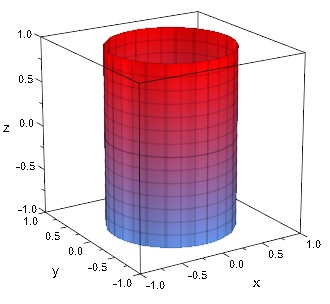}
\caption{\small ~\\ $(a,b,c)=(1,1,0)$}
\end{subfigure}
\begin{subfigure}{.3\textwidth}
\includegraphics[width=.8\linewidth]{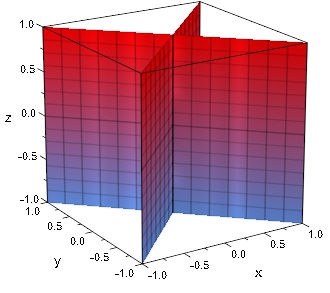}
\caption{\small ~\\ $(a,b,c)=(1,-1,0)$}
\end{subfigure}
\begin{subfigure}{.3\textwidth}
\includegraphics[width=.8\linewidth]{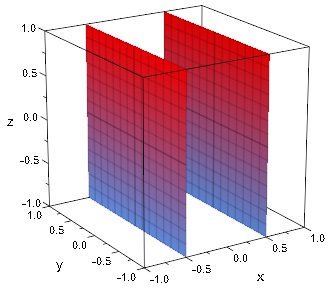}
\caption{\small ~\\ $(a,b,c) =(1,0,0)$}
\end{subfigure}
\caption{Cylinders}
\label{fig:cyl}
\end{figure}

\subsection{Double cone}~\\
 This corresponds to $abc \neq 0$ and $a+b+c=0\,$. The equation of $\mathcal{Q}_{a,b,c}$ is: $$au^2+bv^2=-cw^2\,.$$
  One can verify that the intersection of this cone with each horizontal side $w=\pm 1$ is exactly at the vertices of the cube $u^2=v^2=1$, and that the intersection with each vertical face is a hyperbola. Therefore there are three connected components of $\mathcal{C}\setminus \mathcal{Q}_{a,b,c}$. See figure \ref{fig:cone}.\\
  
\begin{figure}[h]
\includegraphics[width=.3\linewidth]{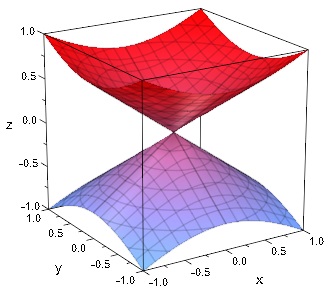}
\caption{Double cones. \small{$(a,b,c)=(0.2,0.2,-0.4)$}}
\label{fig:cone}
\end{figure}

 \subsection{Ellipsoid}~\\
This corresponds to $abc \neq  0$, with  a, b, c of the same sign.
Without loss of generality, we can assume   that $0< a\leq b\leq c$ and 
that  $a+b+c=1$. We note that this implies $\frac 32 (a+b) \leq a + 2b \leq 1$.
\begin{equation}\label{abc}
a u^2+ b v^2 + (1-a-b)w^2 =\frac 1 4\,.
\end{equation}
We denote by $\Omega_{a,b,c}$ the open full ellipsoid delimited by $\mathcal Q_{a,b,c}$.\\
Let us look at the intersection of $\mathcal Q_{a,b,c}$ with the horizontal faces. We have
$$
a u^2 + b v^2 \leq -\frac 34 + \frac 23< 0\,.
$$
We deduce that in this case there are no possible intersections with the horizontal faces, and therefore two subcases can occur depending on the intersection of $\mathcal Q_{a,b,c}$ with the vertical edges. This set is determined by
\begin{equation}\label{abw=1}
 (1-a-b)w^2 =\frac 1 4 - (a+b)\,,\, w \in (-1,+1)\,.
\end{equation}
See figure \ref{fig:ell}.\\
{\bf Subcase} $(a+b) > \frac 14\,$. \\  The ellipsoid $\mathcal{Q}_{a,b,c}$ does not touch the vertical edges and in this case  $\mathcal{C} \cap   \overline{\Omega_{a,b,c} }^{\,c} $  is connected and $\mathcal{C}\setminus \mathcal{Q}_{a,b,c}$ has exactly  two connected components.\\
{\bf Subcase} $(a+b) \leq \frac 14\,.$ \\ $\mathcal{Q}_{a,b,c}$ cuts each vertical edge along a segment $[-w_0,+w_0]$ with  $w_0 = \sqrt{\left( \frac 1 4 - (a+b)\right) / (1-a-b)}$.  The intersection of $\mathcal{Q}_{a,b,c}$ with each vertical face of the cube is the union of two arcs of an ellipse.
 In this case it is clear that $\mathcal{C} \setminus \mathcal{Q}_{a,b,c}$ has three connected components. \\

\begin{figure}[h]
\begin{subfigure}{.3\textwidth}
\includegraphics[width=.8\linewidth]{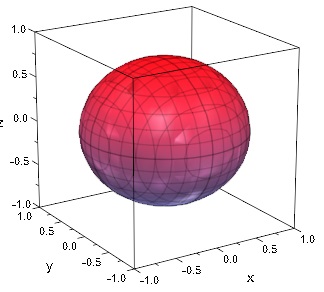}
\caption{\small $(a,b,c)\\= (0.3,0.3,0.4)$}
\end{subfigure}
\begin{subfigure}{.3\textwidth}
\includegraphics[width=.8\linewidth]{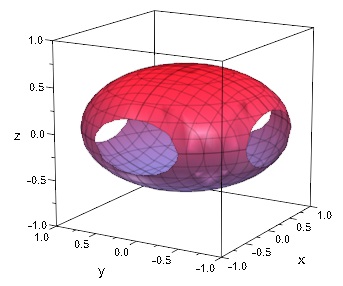}
\caption{\small $(a,b,c)\\= (0.2,0.2,0.6)$}
\end{subfigure}
\begin{subfigure}{.3\textwidth}
\includegraphics[width=.8\linewidth]{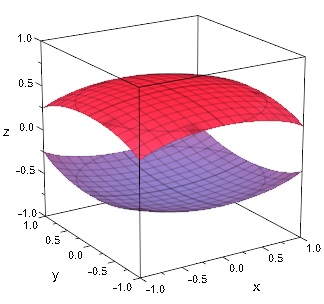}
\caption{\small $(a,b,c)\\=(0.1,0.1,0.8)$}
\end{subfigure}
\caption{Ellipsoids}
\label{fig:ell}
\end{figure}

\subsection{One sheet hyperboloid }~\\
This corresponds to $abc\neq 0$, a,b,c not of the same sign and $(abc) (a+b+c) < 0\,$. 
Without loss of generality, we can assume that $b \geq a  >0$,  $c <0$ and $a+b+c=1\,$.
We note that this implies that $\mathcal Q_{a,b,c}\cap \{w=0\}$ is an ellipse contained in the cube.\\
 
The equation of $\mathcal{Q}_{a,b,c}$ can be written as: $$ au^2+bv^2=\frac 14 -cw^2\,.$$
$\mathcal Q_{a,b,c}$ cuts $\mathbb R^3$ into two components   $\Omega_{a,b,c}^+$ and $\Omega_{a,b,c}^-$ where $\Omega_{a,b,c}^+$ contains $(0,0,0)$. But we have to look inside the cube.\\

  We first observe that $\mathcal Q_{a,b,c}$ has empty intersection the vertical edges. We have indeed 
$$
a + b = 1-c > \frac 14 - c \geq \frac 14 - c w^2\,.
$$
We now look at the intersection with $w=0$. We get an ellipse \break $\mathcal E^0_{a,b,c}:= \mathcal Q_{a,b,c} \cap \{ w=0\}$, whose equation is 
$$
au^2 + b v^2 = \frac 14\,.
$$
We observe that this ellipse could be included in the cube, if $a > \frac 14$ or not if $a \leq \frac 14$.

We also look at the intersection with the upper horizontal face.  We note that the ellipse  $\mathcal E^1_{a,b,c}:= \mathcal Q_{a,b,c} \cap \{w=1\}$  has always  a non empty intersection with this face. \\

Four  subcases appear (See figure \ref{fig:hyp}):\\

\begin{figure}[h]
\begin{subfigure}{.25\textwidth}
\includegraphics[width=.9\linewidth]{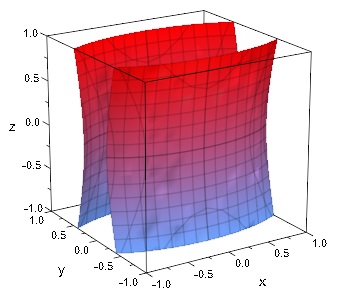}
\caption{\small $(a,b,c)\\= (0.2, 0.9, 0.1)$}
\end{subfigure}
\begin{subfigure}{.25\textwidth}
\includegraphics[width=.9\linewidth]{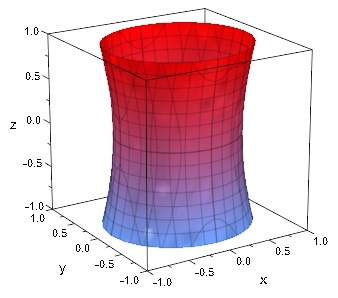}
\caption{\small $(a,b,c)\\ =(0.5,0.6,-0.1)$}
\end{subfigure}
\\
\begin{subfigure}{.25\textwidth}
\includegraphics[width=.9\linewidth]{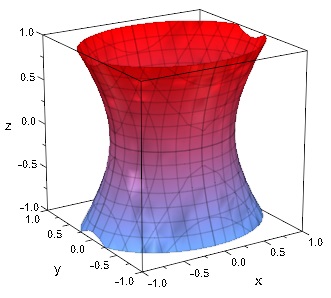}
\caption{\small $(a,b,c)\\=(0.5, 0.8,-0.3)$}
\end{subfigure}
\begin{subfigure}{.25\textwidth}
\includegraphics[width=.9\linewidth]{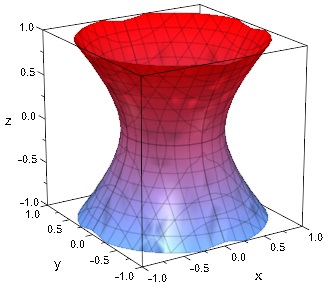}
\caption{\small $(a,b,c)\\ =(0.8,0.8, -0.6)$}
\end{subfigure}
\caption{One Sheet Hyperboloid}
\label{fig:hyp}
\end{figure}

{
\bf Subcase $a \leq \frac 14$}\\
Under this condition  $\mathcal Q_{a,b,c}\cap \{v=0\}\cap \mathcal C$ is empty. Hence  $ \{v=0\}\cap \mathcal C$  is contained in one nodal domain which is invariant by $v\mapsto -v$. The other nodal domains are exchanged by this symmetry. This gives an odd number of nodal domains and this can not be Courant sharp because the labelling is $8$. More precisely the two curves in $\mathcal C$ defined by:
$$
v = \pm  \sqrt{\frac{\frac 14 - a u^2 - c w^2}{b}}
$$
cut the cube in three components. \\

The three last subcases are under the condition that $a >\frac 14$. We note that this condition implies that $\mathcal E^0_{a,b,c}$ is strictly included in the square $(-1,+1)\times (-1,+1)$ and the discussion continues according to the position of $\mathcal E^1_{a,b,c}$ in the horizontal face.\\

{\bf Subcase $\frac 14 <a \leq b<\frac 34$} \\
$\mathcal E^1_{a,b,c}$ is contained in the horizontal face and $\mathcal Q_{a,b,c}$ cuts the cube in two connected domains. \\

{\bf Subcase $\frac 14 < a < \frac 34 \leq b$} \\
$\mathcal E^1_{a,b,c} \cap \partial \mathcal C$ consists of two curves but $\mathcal Q_{a,b,c}$  continue to cut the cube in two domains. For joining two points of $\Omega^-_{a,b,c} \cap \partial \mathcal C$ one can always go to a point in $\{w=0\}$ outside of $\mathcal E^0_{a,b,c}$ and use the connexity (inside the square $\mathcal C$  $\cap$ $\{w=0\}$) of the complementary of the full ellipse.\\

  {\bf Subcase $\frac 34 \leq a$} \\
  $\mathcal E^1_{a,b,c} \cap \mathcal C$ consists of four  curves.  $\mathcal Q_{a,b,c}$  continue to cut the cube in two domains.\\

\subsection{Two sheets hyperboloid}~\\
This corresponds to  $abc\neq 0$, a,b,c not of the same sign and   $(abc) (a+b+c) >0$\,.\\
We can assume $b \geq a  >0$,  $c <0$ and $a+b+c=-1\,.$
The equation of $\mathcal{Q}_{a,b,c}$ can be written as: $$au^2+bv^2= -\frac 14 -cw^2\,.$$
The hyperplane $\{w=0\}$ is contained in one connected component. Hence looking at the symmetry $w \mapsto -w$, we get that necessarily an odd number ($\geq 3$) of nodal domains and $\leq 8$ by Courant's theorem. Hence we know that it cannot be Courant sharp.\\
More precisely, $\mathcal{Q}_{a,b,c}$  meets the hyperplane $\{w=1\}$ along the ellipse $\mathcal E_{a,b,c}$ which this time contains the horizontal upper face of the cube. The analysis of the intersection along each of the vertical faces
 (two symmetric curves by $w\mapsto -w$) shows 
 that we always have exactly three connected components.
See figure \ref{fig:2hyp}.

\begin{figure}[h]
\includegraphics[width=.3\linewidth]{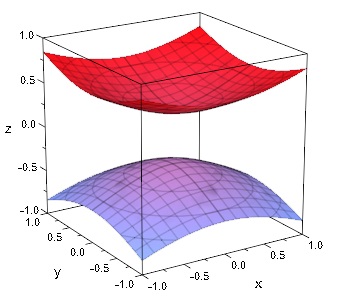}
\caption{Two Sheets Hyperboloid. \small $(a,b,c)=(0.8,0.8,=-2.6)$}
\label{fig:2hyp}
\end{figure}

\section{Conclusion}
In this paper we have analyzed the problem in the simplest example proposed by $\AA$. Pleijel.
 One can of course ask for similar questions for other geometries starting with the parallelepipeds, the ball, the flat tori... The situation for
  $(0,\alpha\pi) \times([0,\beta \pi)\times (0,\gamma \pi)$ is in principle easier in the "irrational" case when 
  $\alpha \ell^2 + \beta m^2 + \gamma n^2 = \alpha \ell_1^2 + \beta m_1^2 + \gamma_1 n_1^2$ implies $(\ell,m,n) = (\ell_1,m_1,n_1)$. Each eigenvalue $\alpha \ell^2+\beta m^2 + \gamma n^2$ 
   is indeed of multiplicity $1$ and the corresponding eigenfunction has $\ell m n$ nodal domains.\\
  One can also think of analyzing  "thin structures" (for example $\gamma$ small or $\beta+\gamma$ small, where previous results in lower dimension can probably be used) in the spirit of \cite{HH1} 
  and get partial results. Another interesting question would be to analyze the Neumann problem for the cube in the spirit of \cite{HPS}.\\

{\bf Acknowledgements}\\
The pictures were obtained by using the programme MATLAB.
This paper was achieved when the two authors were invited at  the Isaac Newton Institute for mathematical sciences in Cambridge. Moreover the first author was supported there as  Simons foundation visiting fellow.
\bibliographystyle{plain}

\appendix \section{The proof of Lemma \ref{Lemmalau}}
We follow an idea appearing in the $(2D)$ case in a course of R. Laugesen \cite{Lau}. We start by assuming that $\lambda$ is not an eigenvalue.  With each triple $(\ell,m,n)$ with $\ell \geq 1$, $m\geq 1$, $n\geq 1$, we associate the cube $$Q_{\ell, m,n} =[\ell -1,\ell]\times [m-1,m]\times [n-1,n]\,.$$
 We observe that
$$
N(\lambda) = \sum _{\ell^2+ m^2+n^2 < \lambda, \ell\geq 1,m\geq 1,n\geq 1} A(Q_{\ell, m,n}) \leq \frac \pi 6 \lambda^\frac 32\,.
$$
We are interested in the lower bound. The claim of Laugesen is that
\begin{equation}\label{lau1}
N(\lambda) >  A ( B_\lambda)\,,
\end{equation}
where
$$
B_\lambda:= \{ (x+1)^2 + (y+1)^2 + (z+1)^2< \lambda, x>0, y >0, z>0\}.
$$
The observation is that
$$
B_\lambda \subset  \cup_{\ell^2+m^2+n^2 < \lambda, \ell\geq 1, m\geq 1,n\geq 1}Q_{\ell,m,n} \,.
$$
For $t >0$, $[t]_+$ denotes the smallest integer $\geq t$.\\
Let $(x,y,z)\in B_\lambda$, then it is immediate to see that $(x,y) \in Q_{[x]_+,[y]_+,[z]_+}$. It remains to verify that
$$ Q_{[x]_+,[y]_+,[z]_+} \subset D(0, \sqrt{\lambda})\,.
$$
But we have, for $(x,y,z) \in B_\lambda$, 
$$
[x]_+^2 + [y]_+^2 + [z]_+^2  \leq (x+1)^2 + (y+1)^2 + (z+1)^2 < \lambda\,.
$$
Coming back to \eqref{lau1}, we have to find a lower bound for the area of $B_\lambda$. We note that by translation by the vector $(1,1,1)$:
\begin{equation}
A(B_\lambda) = A( C_\lambda)\,,
\end{equation}
where $$C_\lambda := D(0, \sqrt{\lambda}) \cap \{x >1 \} \cap\{y >1\}\cap\{z>1\}\,.$$
Let $\chi$ the characteristic function of the interval $(0,1)$. We have to compute the integral
$$
A(C_\lambda) = \int _{D(0,\sqrt{\lambda})} (1-\chi (x)) (1-\chi(y) )(1-\chi(z)) dx dy dz\,.
$$
Developing the formula and using the symmetry by permutation of the variables, we get, if $\lambda \geq 3$,
\begin{equation}
\begin{array}{ll}
A(C_\lambda) & = \quad  \int _{D(0,\sqrt{\lambda})} dx dy dz\\&\quad  - 3 \int _{D(0,\sqrt{\lambda})} \chi (x) dx dydz  \\
 &\quad  + 3  \int _{D(0,\sqrt{\lambda})} \chi (x) \chi(y)dx dydz\\ & \quad  -  \int _{D(0,\sqrt{\lambda})} \chi (x)\chi(y)\chi(z)  dx dydz \,.
\end{array}
\end{equation}
It is then immediate to get the lemma by observing that
\begin{equation}\label{lau4}
\int_{D(0,\sqrt{\lambda})} \chi (x) \chi(y)) dx dy dz >   \sqrt{\lambda-2}\,.
\end{equation}
We have assumed till now that $\lambda$ was not an eigenvalue. But if $\lambda$ is an eigenvalue $>3$, we can apply the previous result for an increasing sequence $\hat \lambda _j$ such  that $\hat \lambda_j \rightarrow \lambda$ (where $\hat \lambda_j >3$ is not an eigenvalue). According to our definition of $N(\lambda)$ in \eqref{defNlambda}, we can pass to the limit and observing that in \eqref{lau4} the inequality is uniformly strict when applied to the sequence $\lambda_j$ , we keep the strict inequality when passing to the limit.  The case $\lambda=3$ can be verified directly.
\\
\end{document}